\newcommand{\suces}[3]{{#1_{#2},\hdots,#1_{#3}}}
\newcommand{\realamp}{\R\cup\{-\infty\}}
\newcommand{\T}{\mathbb{T}}
\newcommand{\m}{\medskip}
\newcommand{\ui}{{\underline{i}}}
\newcommand{\uk}{{\underline{k}}}
\newcommand{\ot}{{\overline{\T}}}
\newcommand{\N}{\mathbb{N}}
\newcommand{\Q}{\mathbb{Q}}
\newcommand{\R}{\mathbb{R}}
\newcommand{\MM}{\mathcal{M}}
\renewcommand{\int}{\operatorname{int}}
\newcommand{\row}{\operatorname{row}}
\newcommand{\rows}{\operatorname{rows}}
\newcommand{\col}{\operatorname{col}}
\newcommand{\win}{\operatorname{win}}
\newcommand{\sol}{\operatorname{sol}}
\newcommand{\dif}{\operatorname{dif}}
\newcommand{\card}{\operatorname{card}}
\newtheorem{thm}{Theorem}
\newtheorem{dfn}[thm]{Definition}
\newtheorem{ex}[thm]{Example}
\newtheorem{rem}[thm]{Remark}
\begin{document}
%%%\date{} %%Para que no aparezca ninguna fecha
\title{An algorithm to describe the solution set of  any tropical linear system  $A\odot x=B\odot x$}
\author{E. Lorenzo\thanks{Partially supported by a La Caixa grant and a FPU grant.} \\ Dpto. de Matem\'{a}tica Aplicada 2\\Facultad de Matem\'{a}ticas y Estad\'{\i}stica\\Universidad Polit\'{e}cnica de Catalu\~{n}a\\\texttt{elisa.lorenzo@gmail.com} \and M.J. de la Puente\thanks{Partially supported by UCM research group 910444. Corresponding author.}\\Dpto. de Algebra\\Facultad de Matem\'{a}ticas\\Universidad Complutense\\
\texttt{mpuente@mat.ucm.es}\\}

\maketitle
\begin{abstract}

AMS class.: 15--04; 15A06; 15A39.

Keywords and phrases: tropical linear system, algorithm.

An algorithm to give an explicit description of all the solutions to  any tropical linear system  $A\odot x=B\odot x$ is presented. The given system is converted into a finite (rather small) number  $p$ of pairs $(S,T)$ of classical linear systems: a system  $S$ of  equations  and a system $T$ of inequalities. The notion, introduced here,  that makes $p$  small, is called compatibility. The particular feature of both $S$  and $T$ is that each  item (equation or inequality) is bivariate, i.e., it involves exactly two variables;   one variable with coefficient $1$, and the other one with $-1$. $S$ is solved by Gaussian elimination. We explain how to solve $T$ by a method similar to Gaussian elimination. To achieve this, we introduce the notion of sub--special matrix. The procedure applied to $T$ is, therefore, called sub--specialization. %The arithmetical complexity is $O(N^6)$, where $N$ is the maximum of the number of rows/columns of $A$.
\end{abstract}

\section{Introduction}
Consider the set $\realamp$, denoted $\T$ for short, endowed with
\emph{tropical addition} $\oplus$ and \emph{tropical multiplication} $\odot$, where
these operations are defined as follows:
    $$a\oplus b=\max\{a,b\},\qquad a\odot b=a+b,$$ for
    $a,b\in\T$. Here, $-\infty$ is the neutral element for tropical addition and   $0$ is the neutral element for tropical multiplication.
    Notice that
    $a\oplus a=a$, for all $a$, i.e., tropical addition is \emph{idempotent}. Notice also that $a$ has no inverse with respect to $\oplus$.
We will write $\oplus$ or $\max$, (resp. $\odot$ or $+$) at our convenience.
In this  paper we will use the adjective \emph{classical} as opposed to \emph{tropical}. Most definitions in tropical mathematics just  mimic the classical ones.
Very often, working with  $(\T,\oplus, \odot)$ leads to  working with $\min$, which will be denoted $\oplus'$.

Given matrices $A,B\in\MM_{m\times n}(\T)$, we want to  describe all   $x\in\T^n$ such that
$A\odot x=B\odot x$. This means
\begin{equation*}
\max\{a_{ij}+x_j:\ 1\le j\le n\}=\max\{b_{ij}+x_j:\ 1\le j\le n\}, \quad i=1,2,\ldots,m.
\end{equation*}
{Of course, $x_j=-\infty$, all $j=1,2,\ldots,n$ is a solution (the \emph{trivial solution}).}
 Using the notions of \emph{winning pairs}, \emph{compatibility} and \emph{win sequence} (introduced in this paper; see. definitions \ref{dfn:winning_pair}, \ref{dfn:compatible} and \ref{dfn:maximal}), the given problem is reduced to solving a finite, {rather small}, number of pairs $(S,T)$ of classical linear systems: a system $S$ of equations and a system $T$ of inequalities, each  item (equation or inequality) being \emph{bivariate}, i.e., it involves exactly two variables,  one with coefficient $1$, and another with coefficient $-1$.  Of course, $S$ can be easily solved by Gaussian elimination. On the other hand, the fact that $T$ consists of bivariate inequalities allows us to suggest  a Gaussian--like procedure  to solve system $T$.   More precisely, by certain {row operations (see p. \pageref{dfn:row_oper})},  we do not triangulate a coefficient matrix for $T$, but nearly so. What we  do is to  transform such a  matrix {into two matrices}, one of which is a \emph{sub--special matrix} (see definition \ref{dfn:special}). {From these two matrices}  the solution set to the   $T$ is directly read. We call this procedure \emph{sub--specialization}, (see remark \ref{rem:sub_sp}). Notice that we need not  use the simplex algorithm or other well--known ones to solve $T$.

 Compatibility of winning pairs (see definition \ref{dfn:compatible}) turns out  a very handy necessary condition to work with. Indeed, two given winning pairs yield  {some}  bivariate linear inequalities. Imagine that {two of} these are  $x_1-x_2+a\le0$ and $-x_1+x_2-b\le0$, for some $a,b\in \T$. Now the compatibility of the winning pairs guarantees that these inequalities  hold simultaneously, i.e., {they can be concatenated into} $x_1+a\le x_2\le x_1+b$, so that $a\le b$. Therefore, by requiring  compatibility of winning pairs, all we are doing is ruling out, at the very beginning, wasting out time with systems of inequalities which, for sure,  the only solution is trivial.
 \m
 The problem $A\odot x= B\odot x$ has been addressed before.  Indeed,  in \cite{Butkovic_Hegedus}, it is proved that the solution set  can be finitely generated. In \cite{Butkovic_Zimmer}, a strongly polynomial algorithm is found which either finds a solution or  tells us that no solution exists. In \cite{Baccelli} sec. 3.5, all the solutions are computed by a technique called symmetrization and resolution of balances. In \cite{Butkovic_Hegedus}, generators for the solution set are computed. The idea of finding (a minimal family of) generators is pursued in \cite{Sergeev_Wagneur, Truffet, Wagneur_al} for the equivalent problem $A\odot x\le B\odot x$. An iterative method in presented in \cite{Cuninghame_B_2003}
 for another  equivalent problem, namely $A\odot x= B\odot y$, where $x$ and $y$ are unknown. Also, there is a technique in \cite{Walkup} to solve the problem $A\odot x\oplus a= B\odot x\oplus b$, relying on a recursive formulation of the closure operator (also called Kleene star operator) on matrices. In \cite{Gondran} ch. 4, the closely related   problem $A\odot x\oplus b=x$ (similar to the classical Jacobi iterative method) is solved using Kleene stars.

 We present the solution set  to $A\odot x=B\odot x$ as a finite union of sets, which are (attending to their presentation) obviously convex. This, of course, agrees with the cellular decomposition  in \cite{Develin}. Convexity issues are also studied in  \cite{Sergeev_S_B}. The recent paper \cite{Truffet} addresses the problem  $A\odot x\le B\odot x$. The drawback of the algorithm presented in \cite{Truffet} is, in our opinion,  that this algorithm calls for the calculation of certain Kleene stars, and this is not an easy task.

 We do not describe the solution set by generators. We do not use Kleene stars. Instead, our method provides a description of each convex piece of the solution set  by parameters and bivariate linear inequality relations among them, in a reduced form.

%%%EN TODOS LOS ARTICULOS QUE TRATAN EL PROBLEMA ENCONTRANDO UNA BASE HAY MUCHISISMAS EXPRESISONES TAUTOLOGICAS

\m

Let $m,n,s\in\N$ be given.
The following are kindred problems in tropical linear algebra:
\begin{itemize}
\item  $P1$: $A\odot x=0$, \label{prb:uno}
\item  $P2$: $A\odot x=b$, \label{prb:dos}
\item  $P3$: $A\odot x\le b$, \label{prb:tres}
\item  $P4$: $A\odot x=B\odot x$, \label{prb:cuatro}
\item  $P5$: $A\odot x\le B\odot x$,  \label{prb:cinco}
\item  $P6$: $C\odot x= D\odot y$, \label{prb:seis}
\item  $P7$: $A\odot x\oplus a= B\odot x\oplus b$.\label{prb:siete}
\end{itemize}
Here the data are matrices $A,B\in\MM_{m\times n}(\T)$, $C\in\MM_{s\times n}(\T)$, $D\in\MM_{s\times m}(\T)$  and vectors $a,b$ over $\T$, and \textbf{the $j$th problem} is \textbf{computing all} vectors $x\in\T^n$, $y\in\T^m$,  such that $Pj$ holds.   By \textbf{deciding the $j$th problem}
we mean either finding one solution or declaring that the problem has no (non--trivial) solution. Some references  since 1984 are \cite{Baccelli, Butkovic_Hegedus, Cuninghame_New, Cuninghame_B_2003, Sergeev_Wagneur, Wagneur_al, Walkup}.  Earlier books and papers can be found there.

\m

Of course, $x=-\infty$, $y=-\infty$ are solutions to $P3,P4,P5$ and $P6$. These are the \emph{trivial solutions}.

Only if the vector $b$ is real,  problem $P2$ reduces to $P1$. More generally,
one must realize that, contrary to classical linear algebra, problems $P7$ and $P4$
do not reduce to problem $P2$ or $P1$, because there are no inverses for tropical addition, so there is
no tropical analogue for the matrix $-B$.
Nevertheless, there are well--known connections among these problems, i.e., being able to
solve some of them is equivalent to being able to solve some other.

\m

We need some notations:
\begin{itemize}
\item For $c,d\in\T$, $c\oplus' d$ means $\min\{c,d\}$ and $c\odot' d$ means $c+d$.
\item For $c,d\in\T^n$, $c\odot' d^T$ means $\min\{c_1+d_1,c_2+d_2,\ldots,c_n+d_n\}$.
\item If $A=(a_{ij})\in\MM_{m\times n}(\R)$ then  $A^*=(-a_{ji})$ is the \emph{conjugate matrix}.
\end{itemize}

The relationship among these problems is as follows:
\begin{itemize}
\item \emph{Deciding $P3$ is possible, if $A$ is real.}

Indeed,  $x^\#=A^*\odot'b$ is a solution (called \emph{principal solution}) and
$x\le x^\#$ if and only if $x$ is a solution; see \cite{Cuninghame_New}, p.~ 31; in \cite{Baccelli}
this process is called \emph{residuation}.

\item \emph{Deciding $P3$ helps with deciding $P2$, if $A$ is real.}

Indeed, $P2$ might be incompatible but,
if it has a solution, then $x^\#$ is the greatest one;  see \cite{Cuninghame_New}, p.~ 31.

\item \emph{Deciding $P6$ implies deciding $P2$.}

Given $A$ and $b$, we decide $A\odot x=I\odot y$. For each pair of solutions $x,y$, if any, we set $y=b$, if possible.

\item \emph{Deciding $P4$ is equivalent to deciding $P6$.}

Suppose $x$ is a solution to $P4$ and write $A\odot x=y$.  Concatenating matrices, write
$C=\left[A \atop B\right]\in\MM_{2m\times n}(\T)$, $D=\left[I \atop I\right]\in\MM_{2m\times n}(\T)$,
where $I$ is the tropical identity matrix, so that $C\odot x=D\odot y$. Therefore, if we can decide $P6$, then we can decide $P4$.

Suppose now $x,y$ are  solutions to $P6$ and write $z=\left[x \atop y\right]$,
$A=\left[
C,-\infty\right]$, $B=\left[
-\infty,D\right]$
so that $A\odot z=B\odot z$. Therefore, if we can decide $P4$, then we can decide $P6$.

\item \emph{$P4$ and $P5$ are equivalent.}

$A\odot x=B\odot x$ is equivalent to $A\odot x\le B\odot x$ and $A\odot x\ge B\odot x$. On the other hand, $A\odot x\le B\odot x$ is equivalent to $(A\oplus B)\odot x=B\odot x$.

\item \emph{Deciding $P4$ implies deciding $P7$.}

We introduce a new scalar variable $z$ and write $A\odot x\oplus a\odot z= B\odot x\oplus b\odot z$.
 Concatenating matrices, write $t=\left[x \atop z\right]$, $C=[A,a]$, $D=[B,b]$ so that $C\odot t=D\odot t$.
 After solving $P4$, set $t_{n+1}=z=0$.
\end{itemize}

\section{The problem}

Given matrices $A,B\in\MM_{m\times n}(\T)$, we want to  describe all {non--trivial}  $x\in\T^n$ such that
\begin{equation}\label{eqn:igualdad}
A\odot x=B\odot x.
\end{equation}
Notations:
\begin{itemize}
\item $\textbf{A}=(\textbf{a}_{ij})$, $\textbf{B}=(\textbf{b}_{ij})$, with
$${\textbf{a}_{ij}=\begin{cases}
a_{ij}& \text{if $a_{ij}\ge b_{ij}$,}\\
-\infty&\text{otherwise,}
\end{cases}} \qquad{\textbf{b}_{ij}=\begin{cases}
b_{ij}& \text{if $a_{ij}\le b_{ij}$,}\\
-\infty&\text{otherwise.}
\end{cases}}$$
\item $M=A\oplus B=\textbf{A}\oplus \textbf{B}=(m_{ij})$ is the \emph{maximum matrix}.
\end{itemize}

Let  \begin{equation}\label{eqn:igualdad2}
\textbf{A}\odot x=\textbf{B}\odot x=M\odot x.
\end{equation}
Notice that (\ref{eqn:igualdad}) is equivalent to (\ref{eqn:igualdad2})
and (\ref{eqn:igualdad2})
is simpler than (\ref{eqn:igualdad}) because it involves fewer real coefficients.
Thus, we will assume that $A=\textbf{A}$ and $B=\textbf{B}$, (\emph{assumption 1}) in the following.

More notations:
\begin{itemize}
\item  $[n]=\{1,\ldots,n\}$, for $n\in\N$. \item For any $c\in \T$, $x=c\in \T^n$ means $x_j=c$,  for all $j\in [n]$.
\item  {For convenience,  we extend $\T$ to $\ot$, by adding $+\infty$. Eventually we will remove $+\infty$.}
\item If $k\in[m]$  and $j,l\in [n]$, then
$$\dif(M;j,l)_k=
\begin{cases}
m_{kj}-m_{kl},& \text{if $m_{kl}\neq -\infty$},\\
+\infty& \text{if $m_{kj}\neq-\infty$ and $m_{kl}=-\infty$},\\ %\text{undetermined}& \text{if $m_{kj}=m_{kl}=-\infty$}.\\
\text{undetermined,}& \text{otherwise}.
\end{cases}$$   The undetermined case will never appear in the following. %is $k$--th component of the column difference  $\col(M,j)-\col(M,l)$. It is defined only if $m_{kl}\neq-\infty$. Notice that for fixed $k$, $\dif(M;j,l)_k$ has the \emph{cocycle properties}: \begin{enumerate} \item $\dif(M;j,l)_k=-\dif(M;l,j)_k$, \item $\dif(M;i,j)_k+\dif(M;j,l)_k=\dif(M;i,l)_k$. \end{enumerate} \item $|I|$ denotes the underlying set of an ordered pair $I$.
\item {$\Omega=\{j\in[n]:x_j=-\infty\}.$}
\end{itemize}

We will deal with  bivariate equalities and inequalities,  linear  on the $x_j$'s with coefficients in $\ot$.
Tautological linear equalities or inequalities will be always removed (\emph{assumption 2}).
A non--tautological linear equality not involving $\pm \infty$ will be reduced  to  an equivalent equation of the form $a=0$, for some  $a$, by the usual algebraic rules. {Of course, $-a=0$ is also possible}.
 A non--tautological linear inequality not involving $\pm \infty$ will be reduced  to  an equivalent inequality of the form $a\le0$, for some  $a$. These will be called  \textbf{normal forms}. Notice that normal forms have real coefficients.

 What will we do with certain normal forms $\phi$, if we know that $j\in\Omega$, i.e., $x_j=-\infty$? We will \textbf{remove and enlarge} as follows:\label{erasing}
 \begin{itemize}
 %%%\item  If $\phi$ is $x_j-x_k+a=0$  or $x_k-x_j+a=0$, for some $k\neq j$ and $a\in\R$, then erase $\phi$ and write $x_k=-\infty$.
 \item  If $\phi$ is $x_j-x_k+a\le0$, for some $k\neq j$ and $a\in\R$, then remove $\phi$.
 \item  If $\phi$ is $x_k-x_j+a\le0$, for some $k\neq j$ and $a\in\R$, then remove $\phi$  and set  $x_k=-\infty$, i.e., enlarge $\Omega$ with $k$.
 \end{itemize}

%\item {La verdad es que este p\'{a}rrafo no me termina de gustar. "Taut\'{o}logico" o  "equivalente" son conceptos de l\'{o}gica que aqu\'{\i} manejo algo chapuceramente. Un experto en l\'{o}gica criticar\'{\i}a mucho este p\'{a}rrafo. Y hacerlo con mayor formalidad es una pesadez.} By a \emph{tautological (linear) equation} we mean $a=a$, for some  expression $a$, linear on the $x_j$'s with coefficients in $\ot$.
%By a \emph{tautological (linear) inequality} we mean either $-\infty\le a$ or $a\le a$ or $a\le +\infty$. Whenever we meet a tautological expression, we will omit it, afterwards.
%
%By the usual algebraic rules, we will reduce every non--tautological linear equation $\phi$ to an equivalent equation of the form $a=0$ (for some $a$), whenever $-\infty$  is not a term in $\phi$  ({of course, $-a=0$ is also possible}). If $-\infty$ is a term in $\phi$, then $\phi$ will be  expressed in the equivalent form $a=-\infty$ (for some $a$). This will be called the \textbf{normal form} of $\phi$.
%
%By the usual algebraic rules, we will reduce every non--tautological linear inequality $\phi$ to an equivalent inequality of the form $a\le0$ (for some $a$), whenever $-\infty$  is not a term in $\phi$. If $-\infty$ is a term in $\phi$, (it can only be a term in the right hand side of $\phi$) then $\phi$ will be  expressed in the equivalent form $a=-\infty$ (for some $a$). This will be called the \textbf{normal form} of $\phi$.
%{No hay unicidad de $a$; todo es m\'{a}s sencillo si $\phi$ es real}.
%
%
%

\begin{rem}
\begin{enumerate}
\item $x=-\infty$  satisfies (\ref{eqn:igualdad}). This is the \emph{trivial solution}.
\item If $\row(A,i)>\row(B,i)$ or $\row(A,i)<\row(B,i)$ for some $i\in [m]$, then $x=-\infty$ is the only solution to (\ref{eqn:igualdad}).
\item If $\row(A,i)=\row(B,i)$ for some $i\in [m]$,  then these two rows can be removed, so that $m$ can be decreased to $m-1$.
\item If $\col(A,j)=\col(B,j)=-\infty$ for some $j\in [n]$,  then no restriction is imposed on $x_j$.  Then these two columns  and $x_j$ can be removed, so that $n$ decreases to $n-1$.
\end{enumerate}
\end{rem}

We  will assume that $\row(A,i)\neq\row(B,i)$ (\emph{assumption 3}), $\row(A,i)\nless\row(B,i)$  (\emph{assumption 4}) and $\row(A,i)\ngtr\row(B,i)$ (\emph{assumption 5}), for all $i\in [m]$, and $\col(A,j)=\col(B,j)=-\infty$ (\emph{assumption 6}), for no $j\in [n]$, in the following.

\m
The sets in the next definition  are denoted $I,J,K,L$ in \cite{Butkovic_Hegedus}.

\begin{dfn}\label{dfn:winning_pair} For each $i\in [m]$,  let
\begin{enumerate}
\item $WA(i)=\{j: a_{ij}>b_{ij}\}$, $WB(i)=\{j: a_{ij}<b_{ij}\}$.
\item $E(i)=\{j: a_{ij}=b_{ij}\neq-\infty\}$, $F(i)=\{j: a_{ij}=b_{ij}=-\infty\}$.
\item $\win(i)=\left(WA(i)\times WB(i)\right)\cup \left(E(i)\times E(i)\right)\subset [n]\times [n]$. Each element of $\win(i)$ is called a \emph{winning pair}.
\end{enumerate}
\end{dfn}
For each $i\in [m]$, $WA(i)\cup WB(i)\cup E(i)\cup F(i)=[n]$ is a \emph{disjoint} union. {By  assumption 3,  $E(i)\cup F(i)\neq [n]$. By  assumptions 4 and 5,   $WA(i)\neq [n]\neq WB(i)$.} Moreover,  $\cap_{h=1}^mF(h)=\emptyset$, by  assumption 6.

\begin{ex}\label{ex:current}
Given
$$A=\left[\begin{array}{rrrr}
 3 &    7&    -1&  -\infty \\
     6&     7&  -\infty &  -\infty \\
     1&     0&     1&  -\infty \\
\end{array}\right], \quad
B=\left[\begin{array}{rrrr}
-\infty &  -\infty &  -\infty &     8\\
  -\infty &  -\infty &     5&     1\\
     1&     0&     1&     2\\
\end{array}\right],$$ we get
$$M=\left[\begin{array}{rrrr}
3&     7&    -1&     8\\
     6&     7&     5&     1\\
     1&     0&     1&     2\\
\end{array}\right].$$
Then $WA(1)=\{1,2,3\}$, $WB(1)=\{4\}$, $WA(2)=\{1,2\}$, $WB(2)=\{3,4\}$, $WB(3)=\{4\}$ and $E(3)=\{1,2,3\}$. Therefore,
$\win(1)=\{(1,4),(2,4),(3,4)\}$, $\win(2)=\{(1,3),(1,4),(2,3),(2,4)\}$, $\win(3)=\{(1,1),(2,2),(3,3)\}$.
\end{ex}

\begin{rem}
Suppose that  $\win(i)=\emptyset$. This is equivalent to either $\row(A,i)=-\infty$ or $\row(B,i)=-\infty$, but not both, by assumption 3. Say $\row(A,i)=-\infty$ and $b_{ij}\neq-\infty$ for some $j\in[n]$. If $A\odot x=B\odot x=y$, then $y_i=-\infty$ and $x_j=-\infty$. With this information in mind, we can remove the $i$--th rows, so that $m$ decreases to $m-1$. Therefore, we will assume that $\row(A,i)\neq -\infty$ and $\row(B,i)\neq -\infty$, for all $i\in[m]$ (\emph{assumption 7}). Then $\win(i)\neq\emptyset$, for all $i\in[m]$.
\end{rem}

Non--trivial solutions to (\ref{eqn:igualdad})  arise from winning pairs. Let us see how.
Recall that $M=A\oplus B=(m_{ij})\in \MM_{m\times n}(\T)$. {$M$ might not be real (see example \ref{ex:non_real})}.

\begin{dfn}\label{dfn:arises}
Consider $i\in [m]$ and $I\in \win(i)$. Let $x\in \T^n$, $y\in\T^m$ be any vectors satisfying
$A\odot x=B\odot x=y$ (in particular,  $$\row(A,i)\odot x=\row(B,i)\odot x=\row(M,i)\odot x=y_i).$$ We say that \emph{the solution $x$ to (\ref{eqn:igualdad}) arises from $I$} if
\begin{equation}\label{eqn:ineq}
m_{ij }+x_j  \le y_i,
\end{equation}  for all $j  \in [n]\setminus F(i)$, with
\textbf{equality for all $j\in |I|$.}
\end{dfn}

{Notice that $j\in F(i)$ if and only if $m_{ij}=-\infty$ and, in such a case,  the inequality  (\ref{eqn:ineq}) is tautological.  Otherwise,     $m_{ij}$  is real.}

\m
Suppose that $x$ arises from $I$ and
write $|I|=\{\ui_1,\ui_2\}$. Then, equality for all $j\in |I|$ means
$$m_{i{\ui_1} }+x_{\ui_1}=y_i=m_{i{\ui_2} }+x_{\ui_2},$$ whence we obtain one \emph{bivariate linear equation}
\begin{equation}\label{eqn:eqn}
x_{\ui_2}=\dif(M;\ui_1,\ui_2)_i+x_{\ui_1}.
\end{equation}
{Notice that $\dif(M;\ui_1,\ui_2)_i$ is real.}
In addition, (\ref{eqn:ineq}) amounts to, \emph{at most, $2n-2$  bivariate linear inequalities in the $x_j$'s.}
Notice that (\ref{eqn:eqn})  is tautological, when  $\ui_1=\ui_2$.
%%%Notice that,  from this point on, we will work with matrix $M$ (we can forget about $A$ and $B$).

\setcounter{thm}{2}
\begin{ex} (Continued) Take $i=1$, $I=(1,4)\in\win(1)$ and suppose that a solution $x$ to (\ref{eqn:igualdad}) arises from $I$. This means $$3+x_1=8+x_4$$ (one bivariate equation) and  $$7+x_2\le y_1,\qquad -1+x_3\le y_1,$$ where $3+x_1=y_1$. Replacing $y_1$ by its value, we obtain two additional  bivariate inequalities. Altogether, (\ref{eqn:ineq}) becomes, in normal form,
\begin{align}
x_1-x_4-5&=0,\label{eqn:I}\\
-x_1+x_2+4&\le0,\label{eqn:II}\\
-x_1+x_3-4&\le0.\label{eqn:III}
\end{align}
\end{ex}

 %%%In particular,  the term $\dif(M;\ui_1,\ui_2)_i$ in equation (\ref{eqn:eqn}) belongs to $\T$.

\setcounter{thm}{5}
\begin{rem}\label{rem:compatible}
Suppose $i, k\in [m]$, $i<k$, $I\in \win(i), K\in \win(k)$. Assume that the solution $x$ arises from $I$ and from $K$.  Then for all $\ui \in |I|$ and $\uk \in |K|$, we have
\begin{align}
 m_{i\ui  }+x_\ui  &=y_i,\label{eqn:rem21}\\
 m_{i\uk  }+x_\uk  &\le y_i,\label{eqn:rem22}\\
 m_{k\uk  }+x_\uk  &=y_k,\label{eqn:rem23}\\
 m_{k\ui  }+x_\ui  &\le y_k.\label{eqn:rem24}
\end{align}
Adding up,  $$m_{i\uk  }+m_{k\ui  }+x_\ui  +x_\uk  \le m_{i\ui  }+m_{k\uk  }+x_\ui  +x_\uk  =y_i+y_k,$$ whence
\begin{equation}\label{eqn:condicion}
m_{i\uk  }+m_{k\ui  }\le m_{i\ui  }+m_{k\uk  }.
\end{equation}
In other words,  the value of the $2\times 2$ tropical minor of $M$, denoted $M(i,k; \ui,\uk)$,
\begin{equation}\label{eqn:minor}
\left|\begin{array}{cc}
m_{i\ui  }&m_{i\uk  }\\
m_{k\ui  }&m_{k\uk  }\\
\end{array}\right|_{trop}=\max\{m_{i\ui  }+m_{k\uk  },\ m_{i\uk  }+m_{k\ui  }\},
\end{equation}
 is \emph{attained at the main diagonal}. One more way to {write this over $\ot$}  is
\begin{equation}\label{eqn:compat}
\dif(M;\ui,\uk)_k\le\dif(M;\ui,\uk)_i.
\end{equation}
\end{rem}
This remark leads to the following key definition.

\begin{dfn}\label{dfn:compatible}
 Consider $i, k\in [m]$, $i<k$, $I\in \win(i), K\in \win(k)$. We say that \emph{$K$ is compatible with $I$} if the value of (\ref{eqn:minor}) is \emph{attained at the main diagonal}, for all $\ui \in |I|$ and all $\uk \in |K|$. {Equivalently, if (\ref{eqn:compat}) holds in $\ot$, for all $\ui \in |I|$ and all $\uk \in |K|$.}
 \end{dfn}

\setcounter{thm}{2}
\begin{ex} (Continued) In our running example, take $i=1$, $k=2$, $I=(1,4)$ and $K=(1,3)$.  Then $K$ is compatible with $I$, since each tropical minor
$$\left|\begin{array}{cc}
3&3\\
6&6\\
\end{array}\right|_{trop}=9,\ \left|\begin{array}{cc}
3&-1\\
6&5\\
\end{array}\right|_{trop}=8,\ \left|\begin{array}{cc}
8&3\\
1&6\\
\end{array}\right|_{trop}=14,\ \left|\begin{array}{cc}
8&-1\\
1&5\\
\end{array}\right|_{trop}=13$$ attains its value at the main diagonal. {The inequalities (\ref{eqn:compat}) are $0\le0$, $1\le4$, $-5\le 5$ and $-4\le9$ in this case.} However, $K=(1,4)$  is not compatible with $I$, since the tropical minor
$$\left|\begin{array}{cc}
3&8\\
6&1\\
\end{array}\right|_{trop}=14$$ does not attain its value at the main diagonal.
\end{ex}

\setcounter{thm}{7}
\begin{rem} \label{rem:decrease} Suppose $i, k\in [m]$, $i<k$, $I\in \win(i), K\in \win(k)$, $K$ compatible with $I$. If $\ui\in |I|$ and $\uk\in |K|$ are fixed, then $\dif (M,\ui,\uk)$ is  \emph{decreasing on the subscripts}, by compatibility. Therefore
\begin{equation}
\left[\dif(M;\ui,\uk)_k,\dif(M;\ui,\uk)_i\right]\label{inter}
\end{equation} is a non--empty closed interval, denoted $\int(M;i,k;\ui,\uk)$.
{It may degenerate to   $[-\infty,a]$, $[a,+\infty]$, $[-\infty,+\infty]$ or $[a,a]$  for some real $a$.}
\end{rem}

\begin{ex}\label{ex:non_real}
Given
$$A=\left[\begin{array}{rrr}
1&-\infty&   -\infty\\
 a_{21}&a_{22}&0
\end{array}\right],\qquad B=\left[\begin{array}{rrr}
-\infty&1&   -\infty\\
 b_{21}&b_{22}&0
\end{array}\right],$$ we get
$$M=\left[\begin{array}{rrr}
1&1&   -\infty\\
 m_{21}&m_{22}&0
\end{array}\right],$$ some $a_{21}, a_{22},b_{21}, b_{22},m_{21}$ and $m_{22}\in\T$. Take $i=1$, $I=(1,2)\in\win(1)$, $k=2$ and $K=(3,3)\in\win(2)$. Then $K$ is compatible with $I$, for any $m_{21}, m_{22}$ since $m_{13}=-\infty$. Then the interval (\ref{inter}) equals $[m_{21}, +\infty]$, for $\ui=1$ and $\uk=3$.
\end{ex}

\begin{dfn} \label{dfn:halfline}
%\begin{enumerate}
%\item A \emph{ half--line relation}  is an expression  $x_j\le a$, where $a\in\T$ and $j\in [n]$. Equivalently, $x_j$ belongs to the half--line
%$[-\infty,a]$.

%\item
An \emph{interval relation}  is an expression  $x_i\in[a,b]+x_k$, where $a\le b$ in $\ot$ and $i,k\in [n]$. Equivalently, an interval relation  is a pair of \emph{concatenated} bivariate linear inequalities $x_k+a\le x_i\le x_k+b$. %%If $a=-\infty$, the interval $[a,b]$ degenerates to a half--line.
%\end{enumerate}
\end{dfn}

Expressions (\ref{eqn:rem21})--(\ref{eqn:rem24}) imply that the following, at most, \textbf{four}  \emph{interval relations} must be true:
\begin{equation}\label{eqn:inter_relation}
x_{\uk}\in \int(M;i,k;\ui,\uk)+x_\ui, \quad \ui\in |I|,\  \uk\in |K|.
\end{equation}
 Notice that
(\ref{eqn:inter_relation}) is tautological  when $\ui=\uk$.

\setcounter{thm}{2}
\begin{ex} (Continued)
In our running example,  take $i=1$, $k=2$ and $I=(1,4)$. The winning pair  $K=(1,3)$ is compatible with $I$  and this gives rise to the non--empty closed intervals
\begin{align*}
\int(M;1,2;1,1)=[0,0],&\qquad\int(M;1,2;1,3)=[1,4],\\
\int(M;1,2;4,1)=[-5,5],&\qquad \int(M;1,2;4,3)=[-4,9]
\end{align*}
and the interval relations
$$x_3\in[1,4]+x_1,\quad x_1\in[-5,5]+x_4,\quad x_3\in[-4,9]+x_4.$$
Equivalently, we can write
\begin{align*}
x_1+1\le x_3\le&\,x_1+4,\\
x_4-5\le x_1\le&\,x_4+5,\\
x_4-4\le x_3\le&\,x_4+9.
\end{align*}

Where does, say,  the interval relation $x_3\in[-4,9]+x_4$ come from? We know that $I=(1,4)\in\win(1)$ translate into (\ref{eqn:I})-(\ref{eqn:III}) and, similarly,  $K=(1,3)\in\win(2)$ translate into
\begin{align}
x_1-x_3+1&=0,\label{eqn:K}\\
-x_1+x_2+1&\le0,\label{eqn:KK}\\
-x_1+x_4-5&\le0.\label{eqn:KKK}
\end{align} These six expressions imply {the concatenated inequalities} $-4+x_4\le x_3\le 9+x_4$. This  is {possible}  by compatibility of $K$ with $I$, since the tropical minor $\left|\begin{array}{cc}
8&-1\\
1&5\\
\end{array}\right|_{trop}=13$ attains its value at the main diagonal.
\end{ex}

%
%(A veces, estas desigualdades no son independientes, en el sentido de que algunas se deducen de otras. Ejemplos: en la dem. del teorema 1, (a)  en el caso $\card |I_1|\cup|I_2|=4$, las cuatro desigualdades son independientes, mientras que  (b) en el caso $I_1=(1,2)$, $I_2=(1,3)$ hay tres desigualdades, que son \begin{align*}
%m_{21}+m_{13}&\le m_{11}+m_{23},\\
%m_{11}+m_{22}&\le m_{21}+m_{12},\\
%m_{22}+m_{13}&\le m_{12}+m_{23},
%\end{align*}
%y la tercera se deduce de sumar la primera y la segunda: !esto se deduce de la condici\'{o}n de cociclo!:
%$$\dif(M;2,1)_2\le \dif(M;2,1)_1$$
%$$\dif(M;1,3)_2\le \dif(M;1,3)_1$$ y, sumando,
%$$\dif(M;2,3)_2\le \dif(M;2,3)_1$$
%Esto afecta a los intervalos de $\sol_I$)

\m
 Four tropical minors of the maximum matrix $M$ must be checked out, in order to decide compatibility of $K$ with $I$. We can \emph{forget repeated minors}, keeping just one of them.  %We say that (some of) these minors are \emph{dependent} if the fact that one of them  attains its value at the main diagonal follows from the fact that the rest attain their values at the main diagonals. We will \emph{keep track only of independent minors}.
 Any minor with repeated columns will be called  \emph{trivial}. \emph{Trivial minors will  be disregarded}; they will play no role. A minor is \emph{tropically singular} if it attains its value at both diagonals; otherwise the minor is \emph{tropically regular}. Of course, $M(i,k;\ui,\ui)$  is  tropically singular. Now, if $\ui\neq\uk$, the minor  $M(i,k;\ui,\uk)$ is tropically singular  if and only if the interval relation (\ref{eqn:inter_relation}) reduces to the non--tautological bivariate equation
 \begin{equation}\label{eqn:eqn2}
 x_{\uk}=\dif(M;\ui,\uk)_k+x_\ui\quad (=\dif(M;\ui,\uk)_i+x_\ui).
 \end{equation}

 Summing up,
if $1\le i<k\le m$, then the conditions $|I|=\{\ui_1,\ui_2\}$, $I\in\win(i)$, $|K|=\{\uk_1,\uk_2\}$, $K\in\win(k)$ and $K$ compatible with $I$  provide, at most,  two  bivariate linear equations and, at most, four interval relations,  %(i.e., at most, eight bivariate linear inequalities, some of which may actually degenerate),
namely
\begin{align}%%\label{eqn:six_items}
x_{\ui_2}&=\dif(M;{\ui_1},{\ui_2})_i+x_{\ui_1},\label{eqn:1}\\
x_{\uk_2}&=\dif(M;{\uk_1},{\uk_2})_k+x_{\uk_1},\label{eqn:2}\\
x_{\uk_1}&\in\int(M;i,k;{\ui_1},{\uk_1})+x_{\ui_1},\label{eqn:3}\\
x_{\uk_2}&\in\int(M;i,k;{\ui_1},{\uk_2})+x_{\ui_1},\label{eqn:4}\\
x_{\uk_1}&\in\int(M;i,k;{\ui_2},{\uk_1})+x_{\ui_2},\label{eqn:5}\\
x_{\uk_2}&\in\int(M;i,k;{\ui_2},{\uk_2})+x_{\ui_2}.\label{eqn:6}
\end{align}

The expressions (\ref{eqn:3}) and (\ref{eqn:5}) can be combined into one such, using (\ref{eqn:1}). Similarly,
(\ref{eqn:4}) and (\ref{eqn:6}) can be combined, using (\ref{eqn:2}).
In conclusion, expressions (\ref{eqn:1})--(\ref{eqn:6}) are reduced to (\ref{eqn:1}), (\ref{eqn:2})
and
\begin{align}
x_{\uk_1}&\in[a,b]+x_{\ui_1},\label{eqn:13}\\
x_{\uk_2}&\in[c,d]+x_{\ui_1},\label{eqn:14}
\end{align}
where $a,b,c,d\in\ot$, $a\le b$, $c\le d$ depend on $M$, $i,k$, $I$ and $K$.
In addition,  a solution $x$ to (\ref{eqn:igualdad}) arising from $I$ and from $K$ must satisfy the following:
\begin{equation}\label{eqn:halfline}
x_j\le (y_i-m_{ij})\oplus'(y_k-m_{kj}),
\end{equation}
for all $j\notin |I|\cup |K|\cup F(i)\cup F(k)$. This follows from (\ref{eqn:ineq}).

\m
Recall that $m$ is the number of rows of the matrix $M$.

\setcounter{thm}{10}
\begin{dfn}\label{dfn:maximal}
Let  $\Upsilon=(\suces I1m)$ be  an $m$--tuple with  $I_h\in\win(h)$, for every $h\in[m]$. We say that $I$ is a \emph{win sequence}  for (\ref{eqn:igualdad}) if  $I_h$ is compatible with $I_i$,  for all  $1\le i<h\le m$.
\end{dfn}

For a win sequence $\Upsilon=(\suces I1m)$, write $$|\Upsilon|=\bigcup_{h=1}^m\left|I_{h}\right|.$$
Given $i,j\in|\Upsilon|$, write
$i\sim j$ if there exist $k,l\in[m]$ such that $i\in|I_k|$, $j\in|I_l|$ and $|I_k|\cap |I_l|\neq\emptyset$. Closing up under transitivity,
we obtain an equivalence relation on $|\Upsilon|$.

\begin{dfn}
Let $\Upsilon=\left(I_{1},\ldots,I_{m}\right)$ be a win sequence.
\begin{enumerate}
\item An index  $i\in\left[n\right]$ is \emph{free in $\Upsilon$} if $i\notin|\Upsilon|$.
  \item An equivalence class for the relation above is called a \emph{cycle} in $\Upsilon$.
\end{enumerate}
\end{dfn}

Consider  a win sequence $\Upsilon=(\suces I1m)$.  Let  $c$ be the number of cycles in $\Upsilon$. We have $1\le c\le \card |\Upsilon|\le \min\{2m,n\}$.  After relabeling columns,
we can suppose that the cycles in $\Upsilon$ are
\begin{align*}
C_{1}&=[k_1],\\
C_{2}&=[k_2]\setminus[k_{1}],\\
&\hdots\\
C_{c}&=[k_c]\setminus[k_{c-1}],
\end{align*}
and that $[n]\setminus[k_c]$ are the free indices, for some $1\le k_1<\cdots<k_c=\card |\Upsilon|\le n$.
%Write  CREO QUE NO LO USO
%$$F=\bigcup_{h=1}^mF(h)$$
%and notice that each index in $F$ is free.

\begin{thm}
Each win sequence $\Upsilon=(\suces I1m)$ provides a convex set, $\sol_\Upsilon\subseteq \T^n$, of solutions to the system
(\ref{eqn:igualdad}). The set $\sol_\Upsilon$ consists of all the solutions $x\in\T^n$ arising from $I_h$, for all $h\in[m]$. Moreover,
\begin{equation}\label{eqn:dim}
\dim\left(\sol_\Upsilon\right)\le n-\card|\Upsilon|+c.
\end{equation}
All solutions  to
(\ref{eqn:igualdad}) are obtained this way.
\end{thm}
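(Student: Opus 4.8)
The plan is to build $\sol_\Upsilon$ explicitly from the data attached to $\Upsilon$, verify it is a set of solutions, and then prove that it captures all solutions. First I would fix a win sequence $\Upsilon=(I_1,\dots,I_m)$ and collect the constraints it generates: for each $h\in[m]$ the equation (\ref{eqn:1})-type relation coming from $I_h$, for each pair $i<h$ the (at most four) interval relations (\ref{eqn:3})--(\ref{eqn:6}) — which, as noted in the excerpt, reduce to (\ref{eqn:1}), (\ref{eqn:2}), (\ref{eqn:13}), (\ref{eqn:14}) — together with the half-line constraints (\ref{eqn:halfline}) for indices $j\notin|I_i|\cup|I_k|\cup F(i)\cup F(k)$, and finally the ``remove and enlarge'' rule (p.~\pageref{erasing}) applied whenever some $x_j=-\infty$ is forced. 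Define $\sol_\Upsilon$ to be the set of all $x\in\T^n$ arising from $I_h$ for every $h\in[m]$; by Definition~\ref{dfn:arises} this is exactly the set cut out by all these bivariate equalities and inequalities. Since each defining relation is either a bivariate linear equation or an interval (concatenated bivariate) inequality or a half-line inequality of the form (\ref{eqn:halfline}), and all of these define convex subsets of $\T^n$ (in the order topology / tropical sense, min and max of affine functions), $\sol_\Upsilon$ is convex as an intersection of convex sets. This gives the first two assertions.

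Next I would prove $\sol_\Upsilon\subseteq\{\text{solutions of }(\ref{eqn:igualdad})\}$. Take $x\in\sol_\Upsilon$ and set $y_i=\row(M,i)\odot x$; I must show $\row(A,i)\odot x=\row(B,i)\odot x=y_i$ for each $i$. Because $x$ arises from $I_i$, the inequality (\ref{eqn:ineq}) holds for all $j\in[n]\setminus F(i)$ with equality on $|I_i|$, so $y_i=m_{ij}+x_j$ for some $j\in|I_i|$; writing $|I_i|=\{\ui_1,\ui_2\}$ with $\ui_1\in WA(i)$, $\ui_2\in WB(i)$ (or both in $E(i)$), this index $j$ contributes to the $A$-side and to the $B$-side respectively, so both $\row(A,i)\odot x$ and $\row(B,i)\odot x$ attain the value $y_i$; and they cannot exceed it since $a_{ij},b_{ij}\le m_{ij}$ entrywise. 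The case $|I_i|\subseteq E(i)$ is handled the same way since then $a_{i\ui_1}=b_{i\ui_1}=m_{i\ui_1}$. Hence $x$ solves (\ref{eqn:igualdad}).

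For the converse — every solution arises this way — I would argue as in Butkovi\v{c}--Heged\"us: given a non-trivial solution $x$ with $y=A\odot x=B\odot x$, for each $i\in[m]$ the maximum $y_i=\max_j(a_{ij}+x_j)$ on the $A$-side is attained at some $\ui\in WA(i)\cup E(i)$, and on the $B$-side at some $\uk\in WB(i)\cup E(i)$ (if $\ui\in E(i)$ one may also take $\uk=\ui\in E(i)$); set $I_i=(\ui,\uk)\in\win(i)$. Then $x$ arises from $I_i$ by construction, and by Remark~\ref{rem:compatible} the inequalities (\ref{eqn:rem21})--(\ref{eqn:rem24}) hold for every $i<h$ and every choice in $|I_i|,|I_h|$, so (\ref{eqn:condicion}) holds, i.e.\ $I_h$ is compatible with $I_i$ for all $i<h$; thus $\Upsilon=(I_1,\dots,I_m)$ is a win sequence and $x\in\sol_\Upsilon$. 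Finally, for the dimension bound (\ref{eqn:dim}): restrict attention to the support of $x$, i.e.\ to coordinates not in $\Omega$ (coordinates forced to $-\infty$ by the remove-and-enlarge rule contribute nothing). Within each cycle $C_t$ the equations (\ref{eqn:1}), (\ref{eqn:2}) link all $\card C_t$ coordinates of that cycle to a single parameter — the interval relations only further constrain that one parameter to lie in a (possibly unbounded) interval, cutting out a convex subset of the line but not raising its dimension — so each cycle contributes at most $1$ degree of freedom, while the $n-\card|\Upsilon|$ free indices each contribute at most $1$. Summing, $\dim(\sol_\Upsilon)\le (n-\card|\Upsilon|)+c$.

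The main obstacle I expect is the bookkeeping in the converse direction, specifically checking that the choice of $I_i$ can always be made inside $\win(i)$ (the subtlety being the $E(i)\times E(i)$ component versus $WA(i)\times WB(i)$: one must verify that when the $A$-max and $B$-max are both attained only at $E(i)$-indices one may legitimately pick a pair in $E(i)\times E(i)$, and otherwise one genuinely gets a $WA\times WB$ pair), and then confirming that compatibility of the resulting tuple is automatic rather than an extra hypothesis — this is exactly what Remark~\ref{rem:compatible} delivers, so the argument goes through, but stating it cleanly for all $i<h$ simultaneously is where care is needed. The convexity and dimension parts are then essentially formal once the defining relations are written out in normal form.
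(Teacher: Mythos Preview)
Your proposal is correct and follows the same route as the paper: convexity because $\sol_\Upsilon$ is cut out by classical linear equalities and inequalities, completeness (``all solutions are obtained this way'') via Remark~\ref{rem:compatible}, and the dimension bound by counting cycles (each contributing at most one parameter) and free indices. You supply more detail than the paper does on the converse construction and on verifying that elements of $\sol_\Upsilon$ actually solve (\ref{eqn:igualdad}); the $E(i)$-subtlety you flag is handled correctly once one notes that an $E(i)$-index attaining one side's maximum automatically attains the other's, so a valid pair in $\win(i)$ always exists.
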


%%The set $\sol_\Upsilon$ might be empty (see example \ref{ex:empty} below).

\begin{proof}
The last statement follows from remark \ref{rem:compatible}.  Convexity of $\sol_\Upsilon$ is trivial, because $\sol_\Upsilon$ is the set of $x\in\T^n$  which satisfy a certain system of classical linear equations and of inequalities: these come up from (\ref{eqn:ineq}), for $I=I_h$, $h\in[m]$. Of course, the dimension of $\sol_\Upsilon$ is the dimension of its linear hull.

\m
For the bound on the dimension, with the notations above, notice that
\begin{itemize}
\item Each cycle $C$ provides a system $T_C$ of
bivariate linear inequalities (these are related to certain interval relations) only involving $x_j$, with $j\in C$ (see example 3 in p. \pageref{ex:current_fin}). This can make the dimension  decrease (when an interval collapses to a point) or stay the same.
\item Each cycle $C$ provides a system $S_C$ of
bivariate linear equations only involving $x_j$, with $j\in C$ (see example 3 in p. \pageref{ex:current_fin}). There are two possible cases: either  the only solution to $S_C$ is trivial (i.e., $S_C$ is incompatible over $\R$; see example \ref{ex:empty}) or  the solution set to $S_C$ is one--dimensional. Notice that, in the former case, the condition $x_j=-\infty$,  for some $j\in C$ (together with certain additional inequalities) might imply $x_l=-\infty$ for some free index $l$ (this explains the expression \lq\lq at most", in the next item).
\item Each free index  $l$ makes the dimension increase by one unit, \emph{at most}. Notice that  %half--line
relations such as (\ref{eqn:halfline}) can make the dimension decrease (when $y_i=-\infty$ or $y_k=-\infty$) or stay the same.
\end{itemize}
There are $c$ cycles and  $n-\card |\Upsilon|$ free indices, so that formula (\ref{eqn:dim}) follows.
\end{proof}

\begin{ex}\label{ex:empty}
The set $\sol_\Upsilon$ can be trivial. Indeed, for %%Let us change the 3rd rows in  example \ref{ex:current}:
 $$A=\left[\begin{array}{rrrr}
 3 &    7&    -1&  -\infty \\
     6&     7&  -\infty &  -\infty \\
     -9&     0&     0&  -\infty \\
\end{array}\right], \quad
B=\left[\begin{array}{rrrr}
-\infty &  -\infty &  -\infty &     8\\
  -\infty &  -\infty &     5&     1\\
     -9&     0&     -\infty&    -4\\
\end{array}\right],$$ we get
$$M=\left[\begin{array}{rrrr}
3&     7&    -1&     8\\
     6&     7&     5&     1\\
     -9&     0&     0&     -4\\
\end{array}\right].$$
Then $\Upsilon=((1,4), (1,3), (3,4))$ is a win sequence which implies
\begin{align*}
x_1-x_4-5&=0,\\
x_1-x_3+1&=0,\\
x_3-x_4+4&=0,
\end{align*}
but this linear system is incompatible over $\R$. Over $\T$, the only solution is $x_1=x_3=x_4=-\infty$. In addition, $x_2=-\infty$ because, from the first row of $A$ we know that $x_2+7\le x_1+3$, (i.e., $x_2-x_1+4\le0$, in normal form); {see remove and enlarge in p. \pageref{erasing}. Here, $\Omega=[4]$.}
\end{ex}

If no win sequences exist, then the only solution to the system (\ref{eqn:igualdad}) is trivial.
The number $p$ of win sequences is no bigger that  $r^m$, where $r=\max\{\lceil\frac{n}{2}\rceil\lfloor\frac{n}{2}\rfloor,n\}$.
Even if some win sequence does exist, it may happen that the only solution to the system (\ref{eqn:igualdad}) is trivial.

\setcounter{thm}{2}
\begin{ex} (Continued)
Let us finish our current example.
The win sequences are $\Upsilon_1=((1,4),(1,3),(3,3))$ and $\Upsilon_2=((2,4),(1,3),(3,3))$.

For $\Upsilon_1$ we must solve the systems
$$S=S_{\Upsilon_1}:\left\{\begin{aligned} x_1+3&=x_4+8,\\ x_1+6&=x_3+5,\\ x_3+1&=x_3+1,\end{aligned}\right.$$
$$T=T_{\Upsilon_1}:\left\{\begin{aligned}
x_2+7&\le x_1+3,\\
x_3-1&\le x_1+3 ,\\
x_2+7&\le x_1+6,\\
x_4+1&\le x_1+6,\\
x_1+1&\le x_3+1,\\
x_2+0&\le x_3+1,\\
x_4+2&\le x_3+1.\end{aligned}\right.$$
Writing in normal form, the system $S$ leads to the coefficients matrix \label{ex:current_fin}
\begin{equation}\label{mat:C1}
C=\left[
\begin{array}{rrrrr}
1&0&0&-1&-5\\
1&0&-1&0&1
\end{array}
\right]
\end{equation}
By Gaussian elimination (which dates back, at least,  to the mathematicians of ancient China, in the 2nd or 3rd century B.C.,   and should better  be called the \emph{Chinese elimination}; see \cite{Boyer} p.~219), we transform the former matrix into the following upper triangular matrix (also denoted by $C$)
\begin{equation}\label{mat:C2}
C=\left[
\begin{array}{rrrrr}
1&0&-1&0&1\\
0&0&1&-1&-6
\end{array}
\right].
\end{equation}
This gives the following partial result: $x_3=x_4+6$ and $x_1=x_3-1$. Substituting $x_3$ by its value (this is usually called \emph{backward substitution}), we get
\begin{equation}\label{eqn:sol}
x_2,x_4\in\T,\quad x_3=x_4+6,\quad x_1=x_4+5.
\end{equation}
In particular,  the indices $1,3,4$ belong to the same cycle.

Writing in normal form and removing tautologies, the system $T$ yields the following coefficient matrix
\begin{equation}\label{mat:D1}
D=\left[
\begin{array}{rrrrr}
-1&1&0&0&4\\
-1&0&1&0&-2\\
-1&1&0&0&1\\
-1&0&0&1&-5\\
0&1&-1&0&-1\\
0&0&-1&1&1
\end{array}
\right].
\end{equation}

We want to simplify the information contained in the matrix $D$ as much as possible. In order to do so we will produce two matrices $E$, $N$ such that
$$D[x,1]^T\le0\ \Leftrightarrow\ E[x,1]^T=0\ \text{and}\ N[x,1]^T\le0,$$ $$2\card \rows (E)+\card \rows (N)\le \card \rows (D).$$
Either matrix  $E$ or $N$ could be empty. We will denote $N$  by $D$. {The desired simplification occurs whenever  $\card \rows (N)< \card \rows (D).$}
\m

In order to  simplify, we allow the following \textbf{row operations} \label{dfn:row_oper} on {the real matrix} $D$:
\begin{enumerate}
\item permute two rows,
\item replace a row by the sum of that row plus another row, so that the resulting row has, at most, three non--zero entries,
\item remove a row $r$, when it is \emph{superfluous},  in the following sense: if $r=(\suces r1n,a)$ and $s=(\suces r1n,b)$ are  rows in $D$,  with $r_j\in\R$ and $a\le b\in\R$, then $r$ is superfluous,
\item remove two rows in $D$, when they are opposite, and write any one of them into the matrix $E$.
\end{enumerate}

Warning:  multiplication of a  row by a negative number is not allowed here.

By means of such row operations, we can achieve  an \lq\lq almost" upper triangular matrix $D$ (the precise term is \emph{sub--special}; see definition \ref{dfn:special} below)  and a matrix $E$ (but $E$ turns out to be empty, in this particular case):
\begin{equation}\label{mat:D2}
D=\left[\begin{array}{rrrrr}
-1&1&0&0&4\\
-1&0&1&0&-2\\
-1&0&0&1&-5\\
0&1&-1&0&-1\\
0&0&-1&1&1
\end{array}\right]
\end{equation}
Notice that row $(-1,1,0,0,1)$ in (\ref{mat:D1}) has been eliminated because row  $(-1,1,0,0,4)$ makes it superfluous (the meaning is that $-x_1+x_2+4\le0$ implies $-x_1+x_2+1\le0$).
Now, the matrix (\ref{mat:D2}) stands for the system
\begin{align*}
-x_1+x_2+4\le&0,\\
-x_1+x_3-2\le&0,\\
-x_1+x_4-5\le&0,\\
x_2-x_3-1\le&0,\\
-x_3+x_4+1\le&0.
\end{align*}
We substitute $x_3$ and $x_1$ by their values, as shown in (\ref{eqn:sol}), obtaining a new system of inequalities, which we must write in normal form and apply row operations to it again. Finally, we obtain:
\begin{equation}\label{eqn:sol2}
x_2-x_4-1\le0,
\end{equation} corresponding to the matrix
\begin{equation}
D=\left[\begin{array}{rrrrr}
0&1&0&-1&-1
\end{array}\right].
\end{equation}
Summing up, the solutions arising from $\Upsilon_1$ are given by (\ref{eqn:sol}) and (\ref{eqn:sol2}):
$$x=\left[\begin{array}{c}
x_4+5\\
   x_2\\
 x_4+6\\
   x_4\\
\end{array}\right],\quad\text{s.t.\ } x_2-x_4-1\le0.$$

Similarly, we get that the solutions arising from $\Upsilon_2$ are
$$x=\left[\begin{array}{c}
x_3-1\\
   x_4+1\\
 x_3\\
   x_4\\
\end{array}\right],\quad\text{s.t.\ } x_3-6\le x_4\le x_3-3.$$
\end{ex}

The former example is rather general: we can apply the same procedure to find the solutions $\sol_\Upsilon$, for any win sequence $\Upsilon$.
%Notice that, even though the matrix $M$ might have some entry equal to $-\infty$, all the matrices $C$ and $D$ above  are \emph{real}.%, because we  omit writing tautologies $a=a$ and $-\infty\le a$, whenever possible.
%At some point in the process,  either $C$ or $D$ might become empty.

\m

It is now convenient to give names to certain types of real matrices that we will deal with.

\setcounter{thm}{14}
\begin{dfn}\label{dfn:special} Let  $G=(g_{ij})\in\MM_{m\times (n+1)}(\R)$.
\begin{enumerate}
\item Let $G'\in\MM_{m\times n}(\R)$ be obtained from $G$ by deleting the last column.
\item The  matrix  $G$ is  \emph{special} if   each row of $G'$ is  a permutation of the $n$--vector $(1,-1,0,\ldots,0)$.

\item The special matrix $G$ is  \emph{super--special} if
the first non--zero entry of each row is 1.

\item The special matrix $G$ is \emph{sub--special} if
\begin{enumerate}
\item all rows in $G$ are different and different from rows in $-G$,
\item all rows in $G'$ are different,
\item if $\row(G',i)=-\row(G',k)$, for some $i<k$, then  $k=i+1$, $$g_{i,n+1}<-g_{i+1,n+1}$$  and $\row(G',i)=(\overbrace{0,\ldots,0}^{j-1},1,\overbrace{0,\ldots,0}^{l-1},-1,0,\ldots,0)$, for some $j,l\in [n]$, \label{con:inter}
\item if $\row(G',i)\neq -\row(G',i+1)$, for some $i$, then   $\min\{j: g_{ij}\neq0\}\le\min\{j: g_{i+1, j}\neq0\}$.
\end{enumerate}
\end{enumerate}
\end{dfn}

\begin{ex} Matrices (\ref{mat:C2})--(\ref{mat:D2})  are special.
The following matrix  $G$ is sub--special but not super--special
$$\left[ \begin{array}{rrrrr}1&0&-1&0&3\\ -1&0&1&0&-8\\ 0&-1&1&0&-4\\ 0&0&1&-1&0
\end{array}\right].$$
\end{ex}

%
%We say that the system of inequalities $S[x,1]^T\le0$ consists of
%\begin{itemize}
%\item one  plus--half--line: $x_3-x_4\le0$,
%\item one  minus--half--line: $-x_2+x_3-4\le0$,
%\item one interval relation: $x_1+3\le x_3\le x_1+8$.
%\end{itemize}

\begin{rem}
Notice that, if $G$ is a non--empty sub--special matrix, then the set of $x\in\T^n$ such that $G[x,1]^T\le0$ is neither trivial nor $\T^n$. The set of all such $x$ will be denoted $\sol_G$. If $G$ is empty, we define $\sol_G$ as $\T^n$.
\end{rem}

\begin{rem}\label{rem:sub_sp}
\begin{itemize}
\item By  Gaussian elimination, any special matrix $C$ \emph{corresponding to the system $S$ of equalities $C[x,1]^T=0$}, can  be transformed into a super--special upper triangular matrix {$E$,  such that $$C[x,1]^T=0\ \Leftrightarrow\ E[x,1]^T=0,$$
    $$\card \rows (E)\le \card \rows (C).$$} $E$ might be empty. The matrix $E$ is relabeled as $C$.
\item By  the row operations on p.~\pageref{ex:current_fin}, any special matrix $D$ \emph{corresponding to a system  $T$ of inequalities $D[x,1]^T\le0$}, can  be transformed into a sub--special upper triangular one  $N$ and an additional special matrix $E$, with  $$2\card \rows (E)+\card \rows (N)\le \card \rows (D).$$ Either matrix $N$ or $E$ might be empty. The matrix $E$ corresponds to a system of equalities derived from $T$. The matrix $E$ can be transformed into a super--special one. The matrix $N$ is relabeled as $D$.
    This procedure is called \textbf{sub--specialization of $D$}.
\end{itemize}
\end{rem}

\m
%More notations:
%\begin{itemize}
%
%\item A \emph{plus--half--line relation, in normal form} means an inequality $x_j-x_l+a\le0$, for some $1\le j<l\le n$ and $a\in\R$; it corresponds to the  matrix row $(\overbrace{0,\ldots,0}^{j-1},1,\overbrace{0,\ldots,0}^{l-1},-1,0,\ldots,0,a)$,
%    %%%CREO QUE NO LO USO it will be encoded by the triple $(j,l,a)$.
%
%\item A \emph{minus--half--line relation, in normal form} means an inequality $-x_j+x_l+a\le0$, for some $1\le j<l\le n$ and $a\in\R$; it corresponds to the matrix row $(\overbrace{0,\ldots,0}^{j-1},-1,\overbrace{0,\ldots,0}^{l-1},1,0,\ldots,0,a)$,
%    %%%it will also be encoded by the triple $(j,l,a)$.
%%\item An \emph{interval relation, in normal form} means a double inequality $x_j+a\le x_l\le x_j+b$, for some $1\le j<l\le n$ and $a<b\in\R$; it corresponds to two rows in a matrix $(\overbrace{0,\ldots,0}^{j-1},1,\overbrace{0,\ldots,0}^{l-1},-1,0,\ldots,0,a)$ and $(\overbrace{0,\ldots,0}^{j-1},-1,\overbrace{0,\ldots,0}^{l-1},1,0,\ldots,0,b)$. %%%%it will be encoded by the tuple $(j,l,a,b)$.
%\end{itemize}

%Notice that condition (\ref{con:inter}) in definition \ref{dfn:matrix} corresponds to the double inequality
%\begin{equation}
%x_j+s_{i,n+1}\le x_l \le x_j-s_{i+1,n+1}
%\end{equation}
%meaning that $x_l\in [s_{i,n+1},-s_{i+1,n+1}]+x_j$ is an interval relation.

\section{The algorithm}

An algorithm to solve (\ref{eqn:igualdad}) must find first all win sequences. Then,  for each win sequence $\Upsilon$, the algorithm must write two bivariate systems: one system $S_\Upsilon$ of equations and one system $T_\Upsilon$ of inequalities. Writing these systems into normal form and performing \textbf{remove and enlarge} as in p. \pageref{erasing}, the algorithm must  compute a subset {$\Omega_\Upsilon\subseteq[n]$} and special matrices $C_\Upsilon, D_\Upsilon\in\MM_{m\times (n+1)}(\R)$ such that the former systems are equivalent to
\begin{equation}
C_\Upsilon [x,1]^T=0, \qquad D_\Upsilon [x,1]^T\le 0, {\qquad x_j=-\infty,\ j\in\Omega_\Upsilon.}\label{eqn:igu_des}
\end{equation}
The set of solutions to (\ref{eqn:igu_des}), denoted $\sol_\Upsilon=\sol_{C_\Upsilon}\cap\sol_{D_\Upsilon}$, must now be computed. It might be trivial.
 The non--trivial solutions to (\ref{eqn:igualdad}) is the union of $\sol_\Upsilon$, as $\Upsilon$ runs over all win sequences.

\centerline{\textbf{ALGORITHM}}
\begin{itemize}
\item STEP 1:  compute  the matrices $\textbf{A},\textbf{B}$ and $M$. Replace $A$ and $B$ by $\textbf{A}$ and $\textbf{B}$.
\item STEP 2:  Compute  all winning pairs, for all $i\in [m]$. Store them in a tridimensional array  $W$ ($r$ rows, 2 columns, $m$ pages).  In page $i$ we store all members of $\win(i)$. Blanks are %%filled in/
    padded with zeros.
\item STEP 3: Compute all win sequences. Store them in a tridimensional array $WS$ ($m$ rows, 2 columns, $p$ pages), with $0\le p\le r^m$. No entry of $WS$ is zero. If $WS$ is empty, then the only solution to  system (\ref{eqn:igualdad}) is trivial, RETURN.

\item  FOR each win sequence $\Upsilon$
\begin{itemize}

\item STEP 4:  Compute the set {$\Omega_\Upsilon\subseteq [n]$ and} the special matrices $C_\Upsilon$ and  $D_\Upsilon$.

\item STEP 5: Sub--specialize matrix  $D_\Upsilon$. In order to do so, first, obtain special matrices $E_\Upsilon$, $N_\Upsilon$ such that
$$D_\Upsilon[x,1]^T\le0\ \Leftrightarrow\ E_\Upsilon[x,1]^T=0\ \text{and}\ N_\Upsilon[x,1]^T\le0,$$ $$2\card \rows (E_\Upsilon)+\card \rows (N_\Upsilon)\le \card \rows (D_\Upsilon).$$
Either matrix  $E_\Upsilon$ or $N_\Upsilon$ could be empty. Denote $N_\Upsilon$  by $D_\Upsilon$. By row operations,  work on  $D_\Upsilon$ to make it  sub--special.

%\item STEP 5: By elementary row operations,  work on  $D_\Upsilon$ to obtain special matrices $E_\Upsilon$, $N_\Upsilon$ such that
%$$D_\Upsilon[x,1]^T\le0\ \Leftrightarrow\ E_\Upsilon[x,1]^T=0\ \text{and}\ N_\Upsilon[x,1]^T\le0$$ and $$2\card \rows (E_\Upsilon)+\card \rows (N_\Upsilon)\le \card \rows (D_\Upsilon).$$
%Either matrix  $E_\Upsilon$ or $N_\Upsilon$ could be empty.  By elementary row operations,  work on  $N_\Upsilon$ to make it  sub--special.
%Solve the  classical linear system of inequalities $N_\Upsilon[x,1]^T\le 0$,  by backward substitution. The solution set, denoted $\sol_{N_\Upsilon}$, is expressed in terms of  half--lines (plus and minus) and  intervals.   If $\sol_{N_\Upsilon}$ is empty, go to work with the next win sequence.

\item STEP 6: Concatenate the matrices $C_\Upsilon$ and $E_\Upsilon$ into a matrix, which we can denote again by $C_\Upsilon$. By Gaussian elimination,  work on  $C_\Upsilon$ to make it  super--special and upper triangular. {The number of rows of $C_\Upsilon$ may decrease.}
Solve the  classical linear system $C_\Upsilon[x,1]^T=0$. {The set $\Omega_\Upsilon$ may enlarge.} The solution set in $\T^n$ is denoted $\sol_{C_\Upsilon}$; it may depend on a number of parameters. If $\sol_{C_\Upsilon}$ is trivial, GO TO WORK WITH THE NEXT WIN SEQUENCE.

\item STEP 7:  If $D_\Upsilon$ is non--empty, substitute $x\in\sol_{C_\Upsilon}$ into $D_\Upsilon[x,1]^T\le0$ to obtain a new system of linear inequalities. Write this system in normal form and  denote it $D_\Upsilon[x,1]^T\le0$ again. {The set $\Omega_\Upsilon$ may enlarge.}

\item STEP 8: If $D_\Upsilon$ is non--empty, sub--specialize matrix  $D_\Upsilon$. Obtain special matrices $E_\Upsilon$, $N_\Upsilon$ such that
$$D_\Upsilon[x,1]^T\le0\ \Leftrightarrow\ E_\Upsilon[x,1]^T=0\ \text{and}\ N_\Upsilon[x,1]^T\le0,$$ $$2\card \rows (E_\Upsilon)+\card \rows (N_\Upsilon)\le \card \rows (D_\Upsilon).$$
Either matrix  $E_\Upsilon$ or $N_\Upsilon$ could be empty. Denote $N_\Upsilon$  by $D_\Upsilon$. By row operations,  work on  $D_\Upsilon$ to make it  sub--special.
  If $E_\Upsilon$ is empty, then
$$\sol_\Upsilon=\sol_{C_\Upsilon}\cap\sol_{D_\Upsilon}.$$ Otherwise, GO TO STEP 6.
\end{itemize}

\item END FOR

All the solutions to (\ref{eqn:igualdad}) are $\bigcup_{\Upsilon\in WS}\sol_\Upsilon$.
\end{itemize}

We have programmed the former algorithm to solve  system (\ref{eqn:igualdad}).
 Working over $\Q$, let us  compute the complexity of it. The arithmetic complexity counts the number of arithmetic operations ($+,-,\max,\min, <,=$ and $>$, in our situation) in the worst possible case.

 Our programme is divided into two parts. In the first part, we determine all the win sequences.   Say we get $p$  win sequences. The arithmetical complexity of this part is  $$O(m^2n^3p).$$  In the second part, we compute the matrices $C_\Upsilon, D_\Upsilon$ and all the solutions (if any),  for each win sequence $\Upsilon$.  The arithmetic complexity  of the second part is  $$O(m(m^2+n)p).$$ Since the maximum number of winning pairs is $r=\max\{\lceil\frac{n}{2}\rceil\lfloor\frac{n}{2}\rfloor,n\} $, then $p\le r^m$,  where $r$ is $O(n^2)$. This  gives an exponential arithmetical complexity! But, let us take a closer look.
 Clearly, the bigger $n$, the \textbf{more} winning pairs we may have, for each $i\in[m]$. On the other hand, the bigger $m$, the \textbf{fewer} win sequences we have, in  probability, due to the compatibility requirement. Indeed, given winning pairs $I\in\win(i)$, $K\in\win(k)$ with $1\le i<k\le m$, let us assume that the probability of $K$ being compatible with $I$ as $1/2$ (this assumption  is rather reasonable, since \lq\lq $K$ is compatible with $I$" is a yes/no event). Thus, given any sequence of pairs $\Upsilon=(\suces I1m)$,   the probability of $\Upsilon$ being a win sequence is, roughly, $$\frac{1}{2^{m\choose2}}\sim \frac{1}{2^{m^2}}.$$ This proves that if $m$ is big, then we expect $p$ rather small.
In particular, the worst case, $p=r^m$, is  unlikely to happen.
With this in mind, an average complexity  (see \cite{Bogdanov}) for the first part is $$O(m^2n^{3+2m}/2^{m^2})=O(m^22^{(3+2m) \log_2 n-m^2})$$ and it will be, at most polynomial $O(m^2)$, if $\log_2 n\le\frac{m^2}{3+2m}\sim\frac{m}{2}$.
For  the second part we get two terms:  $$O(m^3n^{2m}/2^{m^2})=O(m^32^{2m\log_2n-m^2})$$ and it will be, at most polynomial $O(m^3)$, if $\log_2 n\le\frac{m}{2}$, and
$$O(mn^{1+2m}/2^{m^2})=O(m2^{(1+2m)\log_2 n-m^2})$$ and it will be, at most polynomial $O(m)$, if $\log_2 n\le\frac{m^2}{1+2m}\sim\frac{m}{2}$.

\section{Some examples}

\begin{ex} Given
$$A=\left[\begin{array}{rrr}
  1&     3&  -\infty \\
     5&     0&  -\infty \\
  -\infty&      3&  -\infty
\end{array}\right], \quad
B=\left[\begin{array}{rrr}
-\infty&   -\infty&      3\\
     5&     0&     2\\
     3&  -\infty&      2
\end{array}\right],$$ we get
$$M=\left[\begin{array}{rrr}
1&     3&     3\\
     5&     0&     2\\
     3&     3&     2
\end{array}\right].$$
The only win sequence is  $\Upsilon=((2,3),(1,1),(2,1))$.
The solutions arising from $\Upsilon$ are
$$x=\left[\begin{array}{c}
x_3\\
x_3\\
x_3\\
\end{array}\right].$$
Here we have  $m=n=r=3$, $r^m=27$, $r^m/2^{m^2}\simeq 0.0527$ and $p=1$. The programme execution lasted $0.038883$ seconds with Matlab R2007b in a portable PC working with Windows Vista.
\end{ex}

\begin{ex} (From \cite{Sergeev_Wagneur}) Given
$$A=\left[\begin{array}{rrrrrrr}
-\infty&   -\infty&   -\infty&      0&     4&     2&     6\\
  -\infty&      5&     6&  -\infty&   -\infty&   -\infty&      2
\end{array}\right],$$
$$B=\left[\begin{array}{rrrrrrr}
 0&     1&     5&  -\infty&   -\infty&   -\infty&   -\infty \\
     3&  -\infty&   -\infty&      0&     2&     4&  -\infty
\end{array}\right],$$ we get
$$M=\left[\begin{array}{rrrrrrr}
0&     1&     5&     0&     4&     2&     6\\
     3&     5&     6&     0&     2&     4&     2
\end{array}\right].$$
The win sequences are $\Upsilon_1=((4,1),(2,1))$, $\Upsilon_2=((4,3),(2,1))$, $\Upsilon_3=((5,1),(2,1))$, $\Upsilon_4=((5,3),(2,1))$, $\Upsilon_5=((6,1),(2,1))$, $\Upsilon_6=((6,3),(2,1))$, $\Upsilon_7=((7,1),(2,1))$ and $\Upsilon_8=((7,3),(2,1))$.

Here we have  $m=2$, $n=7$, $r=12$, $r^m=144$, $r^m/2^{m^2}=9$ and $p=8$. The programme execution lasted $1.210653$ seconds with Matlab R2007b in a portable PC working with Windows Vista.

The solutions arising from $\Upsilon_1$ are
$$x=\left[\begin{array}{c}
x_4\\
x_4-2\\
x_3\\
x_4\\
x_5\\
x_6\\
x_7
\end{array}\right],\quad\text{s.t.\ }
\begin{aligned}
x_3-x_4+5&\le0,\\
-x_4+x_5+4&\le0,\\
-x_4+x_6+2&\le0,\\
-x_4+x_7+6&\le0.
\end{aligned}$$

The solutions arising from $\Upsilon_2$ are
$$x=\left[\begin{array}{c}
x_2+2\\
x_2\\
x_4-5\\
x_4\\
x_5\\
x_6\\
x_7
\end{array}\right],\quad\text{s.t.\ }
\begin{aligned}
x_2+2&\le x_4\le x_2+4,\\
-x_2+x_5-3&\le0,\\
-x_2+x_6-1&\le0,\\
-x_2+x_7-3&\le0,\\
-x_4+x_5+4&\le0,\\
-x_4+x_6+2&\le0,\\
-x_4+x_7+6&\le0.
\end{aligned}$$

The solutions arising from $\Upsilon_3$ are
$$x=\left[\begin{array}{c}
x_5+4\\
x_5+2\\
x_3\\
x_4\\
x_5\\
x_6\\
x_7
\end{array}\right],\quad\text{s.t.\ }
\begin{aligned}
x_3-x_5+1&\le0,\\
x_4-x_5-4&\le0,\\
-x_5+x_6-2&\le0,\\
-x_5+x_7+2&\le0.
\end{aligned}$$

The solutions arising from $\Upsilon_4$ are
$$x=\left[\begin{array}{c}
x_2+2\\
x_2\\
x_5-1\\
x_4\\
x_5\\
x_6\\
x_7
\end{array}\right],\quad\text{s.t.\ }
\begin{aligned}
x_2-2&\le x_5\le x_2,\\
x_4-x_5-4&\le0,\\
-x_2+x_4-5&\le0,\\
-x_2+x_6-1&\le0,\\
-x_2+x_7-3&\le0,\\
-x_5+x_6-2&\le0,\\
-x_5+x_7+2&\le0.
\end{aligned}$$

The solutions arising from $\Upsilon_5$ are
$$x=\left[\begin{array}{c}
x_6+2\\
x_6\\
x_3\\
x_4\\
x_5\\
x_6\\
x_7
\end{array}\right],\quad\text{s.t.\ }
\begin{aligned}
x_3-x_6+3&\le0,\\
x_4-x_6-2&\le0,\\
x_5-x_6+2&\le0,\\
-x_6+x_7+4&\le0.
\end{aligned}$$

The solutions arising from $\Upsilon_6$ are
$$x=\left[\begin{array}{c}
x_2+2\\
x_2\\
x_6-3\\
x_4\\
x_5\\
x_6\\
x_7
\end{array}\right],\quad\text{s.t.\ }
\begin{aligned}
x_2&\le x_6\le x_2+1,\\
x_4-x_6-2&\le0,\\
x_5-x_6+2&\le0,\\
-x_2+x_4-5&\le0,\\
-x_2+x_5-3&\le0,\\
-x_2+x_7-3&\le0,\\
-x_6+x_7+4&\le0.
\end{aligned}$$

The solutions arising from $\Upsilon_7$ are
$$x=\left[\begin{array}{c}
x_7+6\\
x_7+4\\
x_3\\
x_4\\
x_5\\
x_6\\
x_7
\end{array}\right],\quad\text{s.t.\ }
\begin{aligned}
x_3-x_7-1&\le0,\\
x_4-x_7-6&\le0,\\
x_5-x_7-2&\le0,\\
x_6-x_7-4&\le0.
\end{aligned}$$

The solutions arising from $\Upsilon_8$ are
$$x=\left[\begin{array}{c}
x_2+2\\
x_2\\
x_7+1\\
x_4\\
x_5\\
x_6\\
x_7
\end{array}\right],\quad\text{s.t.\ }
\begin{aligned}
x_2-4&\le x_7\le x_2-2,\\
x_4-x_7-6&\le0,\\
x_5-x_7-2&\le0,\\
x_6-x_7-4&\le0,\\
-x_2+x_4-5&\le0,\\
-x_2+x_5-3&\le0,\\
-x_2+x_6-1&\le0.
\end{aligned}$$
The set of solutions has pure dimension 5.
\end{ex}

\section*{Acknowledgements} Our programme is written in MATLAB. Our gratitude to R. Gonz\'{a}lez Alberquilla, for answering a few questions about MATLAB,  and to J.P. Quadrat for having drawn our attention to this problem for which, we believe, new algorithms are still of interest.

%bibliografia tropical: TODA LA USADA


\begin{thebibliography}{11}
\newcounter{bi}
\setcounter{bi}{-1}
\stepcounter{bi}%{1}

%\addtocounter{bi}{1}\bibitem[\thebi]{Akian_HB} M. Akian,  R. Bapat and S. Gaubert, \emph{Max--plus algebra}, chapter 25 in \emph{Handbook of linear algebra}, L. Hobgen (ed.) Chapman and Hall, 2007.

%\addtocounter{bi}{1}\bibitem[\thebi]{Akian_AMS} M. Akian,  S. Gaubert and C. Walsh, \emph{Discrete max--plus spectral theory}, in \cite{Litvinov_ed}, 53--77.

%\addtocounter{bi}{1}\bibitem[\thebi]{Ansola} M. Ansola, \emph{Propiedades
%m{\'e}tricas de las c{\'o}nicas tropicales}, Trabajo de
%investigaci{\'o}n, Facultad de Matem{\'a}ticas, UCM,
%(2006).
%
%\addtocounter{bi}{1}\bibitem[\thebi]{Ansola_tri} M. Ansola and M.J. de la Puente, \emph{A note on tropical triangles in the plane}, accepted for publication in \emph{Acta Math. Sinica (Engl. ser.)}, (2008).%arXiv:math.CO/0701222, 2007.
%
%\addtocounter{bi}{1}\bibitem[\thebi]{Ansola_con} M. Ansola and M.J. de la Puente, \emph{Tropical conics for the layman}, accepted for publication in  \emph{Idempotent mathematics and mathematical physics, vol. II},  Proceedings Moscow 2007, G.L. Litvinov, V.P. Maslov, S. Sergeev (eds.), arXiv 0709.4119 math RA.
%%American Mathematical Society, Contemp. Math. \textbf{ 377}, (2005).
%
\addtocounter{bi}{1}\bibitem[\thebi]{Baccelli} F.L. Baccelli, G. Cohen, G.J. Olsder and J.P. Quadrat, \emph{ Syncronization and linearity}, John Wiley; Chichester; New York, 1992.

%
%\addtocounter{bi}{1}\bibitem[\thebi]{Barbero} J.F. Barbero, \emph{Transformaciones
%en el plano tropical}, Trabajo de
%investigaci{\'o}n, Facultad de Matem{\'a}ticas, UCM,
%(2007).

\addtocounter{bi}{1}\bibitem[\thebi]{Bogdanov} A. Bogdanov and L. Trevisan, \emph{Average--case complexity}, arXiv:cs/0606037v2, (2006).


\addtocounter{bi}{1}\bibitem[\thebi]{Boyer} C.B. Boyer, \emph{A history of mathematics}, John Wiley; Chichester; New York, 1968.


%\addtocounter{bi}{1}\bibitem[\thebi]{Butkovic} P. Butkovi\v{c}, \emph{Max--algebra: the linear algebra of combinatorics?}, Linear Algebra Appl. \textbf{367}, (2003), 313--335.

%\addtocounter{bi}{1}\bibitem[\thebi]{Butkovic_S} P. Butkovi\v{c}, \emph{Simple image set of $(\max,+)$ linear mappings}, Discrete Appl. Math. \textbf{105}, (2000), 73--86.

\addtocounter{bi}{1}\bibitem[\thebi]{Butkovic_Hegedus} P. Butkovi\v{c} and G. Heged\"us, \emph{An elimination method for finding all solutions of the system of linear equations over an extremal algebra}, Ekonom. Mat. Obzor. \textbf{20}, 203--214, (1984).

\addtocounter{bi}{1}\bibitem[\thebi]{Butkovic_Zimmer} P. Butkovi\v{c} and K. Zimmermann, \emph{A strongly polynomial algorithm  for solving two--sided systems in max--algebra}, Discrete  Math. Appl. \textbf{154}, 437--446, (2006).



%\addtocounter{bi}{1}\bibitem[\thebi]{Carre} B. Carr{\'e}, \emph{Graphs and networks}, Clarendon Press, Oxford, 1979.
%
%\addtocounter{bi}{1}\bibitem[\thebi]{Cohen} G. Cohen, S. Gaubert and J.P. Quadrat, \emph{Duality and separation theorems in idempotent semimodules}, Lineal Algebra Appl. \textbf{379}, (2004), 395--422.
%
%\addtocounter{bi}{1}\bibitem[\thebi]{Cuninghame} R. Cuninghame--Green, \emph{Minimax algebra, LNEMS, \textbf{166}, Springer, 1970.

\addtocounter{bi}{1}\bibitem[\thebi]{Cuninghame_New} R.A.  Cuninghame--Green, \emph{Minimax algebra and applications}, in \emph{Adv. Imag. Electr. Phys.}, \textbf{90}, P. Hawkes, (ed.), Academic Press, 1--121, 1995.

%\addtocounter{bi}{1}\bibitem[\thebi]{Cuninghame_B} R.A. Cuninghame--Green,
%P. Butkovi\v{c}, \emph{Bases in max-algebra}, Linear
%Algebra Appl.  \textbf{389}, (2004) 107--120.


\addtocounter{bi}{1}\bibitem[\thebi]{Cuninghame_B_2003} R.A. Cuninghame--Green,
P. Butkovi\v{c}, \emph{The equation $A\otimes x=B\otimes y$ over $(\max,+)$}, Theoret. Comput. Sci.
\textbf{293},  3--12, (2003).

\addtocounter{bi}{1}\bibitem[\thebi]{Develin} M. Develin, B. Sturmfels,
\emph{Tropical convexity}, \emph{Doc. Math.} \textbf{9}, 1--27, (2004);  Erratum in Doc. Math. \textbf{9} (electronic),
205--206, (2004).

%\addtocounter{bi}{1}\bibitem[\thebi]{Dickenstein}
%A. Dickenstein, E. Feichtner and B. Sturmfels, \emph{ Tropical discriminants}, JAMS, \textbf{20} (2007), 1111-1133.
%
%\addtocounter{bi}{1}\bibitem[\thebi]{Einsiedler} M. Einsiedler, M. Kapranov and D. Lind,
%\emph{Non--archimedean amoebas and tropical varieties}, J. Reine Angew. Math. \textbf{601} (2006), 139--157.

%\addtocounter{bi}{1}\bibitem[\thebi]{Foulds} L.R. Foulds,
%\emph{Graph theory applications}, Springer, 1992.
%
%\addtocounter{bi}{1}\bibitem[\thebi]{Gathmann}  A. Gathmann, \emph{Tropical algebraic
%geometry}, Jahresbericht der DMV, \textbf{108}, n.1,
%(2006), 3--32.%;  arXiv:math.AG/0601322, 2006.
%
%\addtocounter{bi}{1}\bibitem[\thebi]{Gathmann_M}  A. Gathmann and H. Markwig, \emph{The numbers of tropical plane curves thorugh points in general position}, J. Reine Angew. Math. 602 (2007), 155-177.
%
%\addtocounter{bi}{1}\bibitem[\thebi]{Gaubert}  S. Gaubert and Max Plus, \emph{Methods and applications of $(\max, +)$ linear algebra},
%in R. Reischuk and M. Morvan, (eds.), STACS'97, \textbf{1200} in LNCS, 261–282, Lübeck, March 1997, Springer.
%%
%\addtocounter{bi}{1}\bibitem[\thebi]{Gelfand} I.M Gelfand, M.M. Kapranov and A. V. Zelevinsky,
%\emph{Discriminants, reslutants and multidimensional determinants}, Birkh\"auser, 1994.
%
%%\addtocounter{bi}{1}\bibitem[7]{Grigg}  N. Grigg, N. Manwaring, \emph{An elementary proof of the
%%fundamental theorem of tropical algebra},
%%arXiv:math.CO/0707.2591, 2007.
%

\addtocounter{bi}{1}\bibitem[\thebi]{Gondran} M. Gondran and M. Minoux,
\emph{Graphs, dioids and semirings. New models and algorithms}, Springer, 2008.

\addtocounter{bi}{1}\bibitem[\thebi]{Gunawardena} J. Gunawardena (ed.),
\emph{Idempotency}, Publications of the Newton Institute, Cambridge U. Press, 1998.

%\addtocounter{bi}{1}\bibitem[\thebi]{Harary} F. Harary,
%\emph{Graph theory}, Addison--Wesley, 1972.
%
%\addtocounter{bi}{1}\bibitem[\thebi]{Hsie} B.Y. Hsie and Z.B. Liang,
%\emph{Toric rigid spaces}, Acta Math. Sinica (Engl. ser.)  \textbf{23}, n.9, (2007),  1621--1628.
%
%\addtocounter{bi}{1}\bibitem[\thebi]{Itenberg}  I. Itenberg, G. Mikhalkin and E. Shustin, \emph{Tropical algebraic geometry},
%Birkh{\"a}user,  2007.
%%
%\addtocounter{bi}{1}\bibitem[\thebi]{Izhakian}  Z. Izhakian, \emph{Duality of
%tropical curves}, arXiv:math.AG/0503691, 2005.
%
%\addtocounter{bi}{1}\bibitem[\thebi]{Jensen} A.N. Jensen, H. Markwig and T. Markwig, \emph{An algorithm for lifting points in a tropical variety},
%ollect. Math. \textbf{59} (2008), 129-165.
%
%\addtocounter{bi}{1}\bibitem[\thebi]{Joswig} M. Joswig, \emph{Tropical halfspaces}, in J.E. Goodman, J. Pach and E. Welzl, (eds.), \emph{Combinatorial
%and Computational Geometry}, Cambridge University Press, 2005, MSRI Publications Vol. 52,
%409–431.
%\addtocounter{bi}{1}\bibitem[\thebi]{Joswig_S} M. Joswig, B. Sturmfels and J. Yu, \emph{Affine buildings and tropical convexity}, Albanian J. Math. \textbf{1}, n.4, (2007) 187--211.

%\addtocounter{bi}{1}\bibitem[\thebi]{Kim}  K.H. Kim, F.W. Roush, \emph{Factorization
%of polynomials in one variable over the tropical
%semiring}, arXiv:math.CO/0501167, 2005.
%%
%\addtocounter{bi}{1}\bibitem[\thebi]{Kuhn}  H.W. Kuhn, \emph{The hungarian method for the assignment problem}, Naval Res. Logist. \textbf{2}, (1955), 83--97.
%
%\addtocounter{bi}{1}\bibitem[\thebi]{Litvinov_ed}  G.L. Litvinov, V.P. Maslov, (eds.) \emph{
%Idempotent mathematics and mathematical physics},  Proceedings Vienna 2003, American
%Mathematical Society, Contemp. Math. \textbf{377}, (2005).

%\addtocounter{bi}{1}\bibitem[\thebi]{Litvinov} G.L. Litvinov, \emph{The Maslov dequantization, idempotent and
%tropical mathematics: a very brief introduction}, in \cite{Litvinov_ed}
%
\addtocounter{bi}{1}\bibitem[\thebi]{Litvinov_ed_2}  G.L. Litvinov, S.N. Sergeev, (eds.) \emph{
Tropical and idempotent mathematics},  Proceedings Moscow 2007, American
Mathematical Society, Contemp. Math. \textbf{495}, (2009).

%
%\addtocounter{bi}{1}\bibitem[\thebi]{Merlet}  G. Merlet, \emph{Semigroup of matrices acting on the max--plus projective space}, submitted to special issue of ILAS 2008, Canc{\'u}n.

%
%%\addtocounter{bi}{1}\bibitem[\thebi]{Mikhalkin_D}  G. Mikhalkin, \emph{Decomposition into pairs--of--pants for complex
%%algebraic hypersurfaces}, Topology, \textbf{43}, (2004),
%%10035--1065.
%
%\addtocounter{bi}{1}\bibitem[\thebi]{Mikhalkin_E}  G. Mikhalkin, \emph{Enumerative
%tropical algebraic geometry in $\R^2$}, J. Amer. Math.
%Soc. \textbf{18}, n.2,  (2005), 313--378.

%\addtocounter{bi}{1}\bibitem[\thebi]{Mikhalkin_J}  G. Mikhalkin and I. Zharkov, \emph{ Tropical curves, their jacobians and theta functions}, arXiv:math/0612267, (2007).
%%
%\addtocounter{bi}{1}\bibitem[\thebi]{Mikhalkin_T}  G. Mikhalkin, \emph{Tropical
%geometry and its applications}, in Invited lectures, v.
%II,  Proceedings of the ICM, Madrid,  2006, (M.
%Sanz--Sol{\'e} et al. eds.) 827--852;
%arXiv:math.AG/0601041, 2006.
%
%\addtocounter{bi}{1}\bibitem[\thebi]{Mikhalkin_W}  G. Mikhalkin, \emph{What is a tropical curve?}, Notices AMS, April 2007, 511--513.
%%
%\addtocounter{bi}{1}\bibitem[\thebi]{Nishinou}  T. Nishinou and B. Siebert,  \emph{ Toric degenerations of toric varieties and tropical curves}, arXiv:math/0409060, (2006).

%
%\addtocounter{bi}{1}\bibitem[\thebi]{Papa}  C.H. Papadimitriou and K. Steiglitz, \emph{Combinatorial optimization: algorithms and complexity}, Prentice Hall,   1982 and corrected unabrideged republication by Dover, 1998.
%
%\addtocounter{bi}{1}\bibitem[\thebi]{Richter}  J. Richter--Gebert, B. Sturmfels, T.
%Theobald, \emph{First steps in tropical geometry}, in
%\cite{Litvinov_ed},  289--317.
%%

\addtocounter{bi}{1}\bibitem[\thebi]{Sergeev_Wagneur}  S. Sergeev and E. Wagneur,   \emph{Basic solutions of systems with two max--linear inequalities}, arXiv 1002.0758,  (2010).

\addtocounter{bi}{1}\bibitem[\thebi]{Sergeev_S_B}  S. Sergeev, H. Scheneider and P. Butkovi\v{c},   \emph{On visualization, subeigenvectors and Kleene stars in max algebra}, Linear Algebra Appl. \textbf{431}, 2395--2406, (2009).

%\addtocounter{bi}{1}\bibitem[\thebi]{Shustin}  E. Shustin,   \emph{Patchworking singular algebraic curves, non--archimedean amoebas and enumerative geometry},
%Algebra i Analiz  \textbf{17}, n.2, 170--214,  (2005).

%\addtocounter{bi}{1}\bibitem[\thebi]{Shustin_W}  E. Shustin,   \emph{ A tropical calculation of the Welschinger invariants of real toric Del Pezzo surfaces},
%J. Algebraic Geom. \textbf{15},  n. 2, (2006), 285--322.

%
%\addtocounter{bi}{1}\bibitem[\thebi]{Speyer}  D. Speyer, B. Sturmfels,   \emph{The tropical grassmannian},
%Adv. Geom.  \textbf{4}, 389--411,  (2004).
%
%\addtocounter{bi}{1}\bibitem[\thebi]{Sturmfels} B. Sturmfels,   \emph{Solving systems of polynomial equations},
%CBMS Regional Conference Series in Math.
% \textbf{97}, AMS, Providence, RI, 2002.
%%
%\addtocounter{bi}{1}\bibitem[\thebi]{Tabera_Pap} L.F. Tabera,   \emph{Tropical constructive
%Pappus's theorem}, IMRN  \textbf{39}, 2373--2389 (2005).
%
%\addtocounter{bi}{1}\bibitem[\thebi]{Tabera_Tans} L.F. Tabera,   \emph{Tropical plane geometric
%constructions: a transfer technique in tropical geometry},
%arXiv:math.AG/0511713, 2007.

\addtocounter{bi}{1}\bibitem[\thebi]{Truffet} L. Truffet, \emph{A decomposition formula of idempotent polyhedral cones based on idempotent
superharmonic spaces}, Beitr\"{a}ge Algebra Geom. \textbf{51}, n.2, 313--336, (2010).


%
%\addtocounter{bi}{1}\bibitem[\thebi]{Vigeland_G} M. D. Vigeland,   \emph{The group law on a tropical elliptic curve},
%arXiv:math.AG/05411485, 2004.
%

%\addtocounter{bi}{1}\bibitem[\thebi]{Vigeland_S} M. D. Vigeland,   \emph{ Smooth tropical surfaces with infinitely many tropical lines}, arXiv:math/05411485, (2007).


%\addtocounter{bi}{1}\bibitem[\thebi]{Viro_D} O. Viro,   \emph{Dequantization of real algebraic geometry on logarithmic paper},
%European Congress of Mathematics, Vol.I (Barcelona 2000),
%Prog. Math. 201, Birkh{\"a}user, Basel, 2001, 135--146.
%
%\addtocounter{bi}{1}\bibitem[\thebi]{Viro_W} O. Viro,   \emph{What is an amoeba?}, Notices AMS, September 2002, 916--917.

%\addtocounter{bi}{1}\bibitem[\thebi]{Wagneur_F} E. Wagneur, \emph{ Finitely generated modulo\"{\i}ds. The existence and unicity problem for bases}, in LNCIS \textbf{111}, Springer, 1988.
%%
%%
%\addtocounter{bi}{1}\bibitem[\thebi]{Wagneur_M} E. Wagneur, \emph{Modulo\"{\i}ds and
%pseudomodules. Dimension theory}, Discr. Math. \textbf{ 98}
%(1991) 57--73.

\addtocounter{bi}{1}\bibitem[\thebi]{Wagneur_al} E. Wagneur, L. Truffet, F. Faye and M. Thiam, \emph{Tropical cones defined by max--linear inequalities}, in \cite{Litvinov_ed_2} in this list.

\addtocounter{bi}{1}\bibitem[\thebi]{Walkup} E. A. Walkup  and G. Borriello, \emph{A general linear max--plus solution technique}, in \cite{Gunawardena} in this list.


%\addtocounter{bi}{1}\bibitem[\thebi]{Yoeli} M. Yoeli, \emph{A note on a generalization of boolean matrix theory}, Amer. Math. Monthly \textbf{68}, n.6,
%(1961) 552--557.
%%
%\addtocounter{bi}{1}\bibitem[\thebi]{Zimmermann} U. Zimmermann, \emph{Linear and combinatorial optimization in ordered algebraic structures},  Annals of discrete mathematics \textbf{10}, North--Holland, 1981.

\end{thebibliography}
\end{document}